 \newtheorem{thm}{Theorem}[section]
 \theoremstyle{definition}
 \newtheorem{defn}[thm]{Definition}
 \theoremstyle{remark}
 \newtheorem{rem}[thm]{Remark}
 \numberwithin{equation}{section}
\begin{document}

\title[Odd Unitary Analogue of Gram-Schmidt Process]
 {On the Odd Unitary Analogue of Gram-Schmidt Process}

\author[A.A. Ambily]{A.A. Ambily}

\address{
Department of Mathematics\\
Cochin University of Science and Technology\\
Kerala, India}
\email{ambily@cusat.ac.in}

\author{V.K. Aparna Pradeep}
\address{Kerala School of Mathematics\\ 
Kerala, India}
\email{aparnapradeepvk@gmail.com}

\subjclass{primary 20G99; secondary 13A99, 14L35, 15A24}

\keywords{Gram-Schmidt process, Vaserstein-type matrices, ESD transvections, Odd unitary groups, Elementary hyperbolic unitary groups}

\begin{abstract}
In 1976,  L.N. Vaserstein used a construction analogous to the Gram–Schmidt orthogonalisation, for obtaining a set of symplectic matrices from a set of elementary matrices. We have a similar construction for Petrov's odd unitary group. Here, we prove that the elementary matrices in the odd unitary analogue of the Gram-Schmidt process form a set of generators for the elementary linear group.
\end{abstract}

\maketitle

\section{Introduction}
The Gram-Schmidt process introduced by J. P. Gram and E. Schmidt enables one to orthonormalize a set of vectors in an inner product space. Using this, one can transform a square matrix into an orthogonal matrix by an upper triangular matrix. This motivated L.N. Vaserstein in 1976 to introduce a similar method for symplectic matrices. In \cite{VasersteinSuslin1976}, Vaserstein proved that given an invertible alternating matrix $\varphi$ of size $2n$, there exist two elementary matrices of size $2n-1$, which can be transformed to elementary symplectic matrices with respect to $\varphi$.

\vspace{2mm}

Later, P. Chattopadhyay and R.A. Rao used Vaserstein's construction for studying certain results on the symplectic group with respect to a given invertible alternating matrix in \cite{ChattopadhyayRao2016}. They used the construction for defining the relative elementary symplectic group. In \cite{ChattopadhyayRao2024}, the authors found a new set of generators for the odd sized elementary linear group using the Vaserstein's construction. For an invertible alternating matrix $\varphi$ of size $2n$, they collected all the odd-sized elementary matrices in Vaserstein's construction and denoted the subgroup of the elementary group ${\rm E}_{2n-1}(R)$ generated by these matrices as ${\rm E}_{\varphi}(R)$. The following is the main result proved in \cite{ChattopadhyayRao2024}.
\begin{thm}{\rm \cite[Theorem 5.1]{ChattopadhyayRao2024}}\label{GSChattopadhyayRao}
Let $\varphi$ be an alternating matrix of Pfaffian 1 of size $2n$, with $n \geq2$. Then the groups  ${\rm E}_{2n-1}(R)$ and ${\rm E}_{\varphi}(R)$ are equal.
\end{thm}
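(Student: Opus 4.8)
The inclusion ${\rm E}_{\varphi}(R) \subseteq {\rm E}_{2n-1}(R)$ holds by definition, since every generator of ${\rm E}_{\varphi}(R)$ is one of the odd-sized elementary matrices produced by Vaserstein's construction and hence already lies in ${\rm E}_{2n-1}(R)$. So the whole content is the reverse inclusion, and for that it is enough to exhibit each elementary generator $e_{ij}(\lambda)$ (with $1 \le i \ne j \le 2n-1$ and $\lambda \in R$) as a word in the Vaserstein matrices attached to $\varphi$.

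The first step I would take is to pin down the Vaserstein matrices explicitly for the standard alternating form $\psi_n$. Feeding the elementary symplectic transvections with respect to $\psi_n$ through the Gram--Schmidt process yields, after deleting the distinguished coordinate, a concrete family of $(2n-1) \times (2n-1)$ elementary matrices. The point is that this family already contains a ``hook'' of generators --- enough rows and columns through a common index --- from which, via the Steinberg commutator identity $[e_{ij}(\lambda), e_{jk}(\mu)] = e_{ik}(\lambda\mu)$ for pairwise distinct $i,j,k$, every $e_{ij}(\lambda)$ can be assembled. This is exactly where the hypothesis $n \ge 2$ enters: it guarantees $2n-1 \ge 3$, so that a third distinct index is always available to form the commutators; for $n=1$ the group ${\rm E}_1(R)$ is trivial and the statement degenerates.

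Next I would pass from $\psi_n$ to a general alternating $\varphi$ of Pfaffian $1$. The Pfaffian-$1$ hypothesis is what lets me compare $\varphi$ with $\psi_n$: I would use it to transport the symplectic transvections with respect to $\varphi$ to those with respect to $\psi_n$, and then check that Vaserstein's construction is compatible with this change of form, so that the generating property established for $\psi_n$ carries over to $\varphi$. Concretely, I would track how a congruence $\varphi \mapsto \varepsilon^t \varphi \varepsilon$ acts on the symplectic transvections and hence on the associated odd-sized matrices, verifying that it carries ${\rm E}_{\varphi}(R)$ onto a conjugate of ${\rm E}_{\psi_n}(R)$ inside ${\rm E}_{2n-1}(R)$.

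The principal obstacle is the bookkeeping in these two steps. For the standard form, one must identify precisely which elementary matrices the Vaserstein process outputs and then carry out the finite but intricate chain of commutator manipulations that recovers the omitted generators. For the passage to general $\varphi$, the delicate point is to show that the Pfaffian-$1$ condition genuinely suffices to reduce to $\psi_n$ at the level of the elementary group --- that the relevant change of basis can be taken elementary and that the Vaserstein matrices transform into elements of ${\rm E}_{\varphi}(R)$ --- rather than merely at the level of ${\rm SL}_{2n}(R)$ or after localisation. Establishing this transformation explicitly, instead of invoking a transitivity statement that may fail over a general commutative ring, is the crux of the argument.
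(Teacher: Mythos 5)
You should first note that this paper does not actually prove Theorem \ref{GSChattopadhyayRao}: it is quoted from \cite{ChattopadhyayRao2024}, and the introduction only summarises the strategy there --- first the case of a matrix congruent to the standard alternating matrix, then the general Pfaffian-$1$ case via a local-global principle and the Rao--Swan lemma. Measured against that, your first stage (the easy inclusion ${\rm E}_{\varphi}(R)\subseteq {\rm E}_{2n-1}(R)$, the explicit computation of the Vaserstein matrices for the standard form, and the recovery of all $e_{ij}(\lambda)$ from the row-and-column ``hook'' through a common index via $[e_{i1}(\lambda),e_{1j}(\mu)]=e_{ij}(\lambda\mu)$, which is where $n\ge 2$ enters) is sound and matches how the cited argument begins. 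The gap is in your second stage. Over a general commutative ring an alternating matrix of Pfaffian $1$ need not be congruent to the standard form at all, so there is no $\varepsilon$ with $\varphi=\varepsilon^{t}\psi_{n}\varepsilon$ whose effect on the transvections you could ``track''; you correctly identify this as the crux, but you do not supply a way around it. The cited proof resolves exactly this point by localising (where such congruences do exist), applying the special case locally, and patching with the local-global principle and the Rao--Swan lemma --- machinery your outline names as something to avoid but does not replace. As written, your argument only covers those $\varphi$ lying in the congruence orbit of the standard form.

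For comparison, the route this paper advocates --- and carries out for the odd unitary analogue in Theorem \ref{GramSchmidt} --- dispenses with the reduction to the standard form altogether. In the normalisation used there the vectors $c$ and $d$ are standard basis vectors, so $\theta(v)=I+d^{t}\overline{v}\mu$ and $\eta(v)=I-\bar{1}^{-1}\rho\overline{v}^{t}c$ have all their off-diagonal entries in the first row, respectively the first column; hence each is visibly a product of matrices $E_{1,i}(\ast)$, respectively $E_{i,1}(\ast)$, and conversely a suitable choice of $v$, built from the entries of $\varphi$ and $\varphi^{-1}$, realises every individual $E_{1,i}(a)$ and $E_{i,1}(a)$ as a single $\theta(v)$ or $\eta(v)$. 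Since these generate the whole elementary group by the commutator identity you quote, the equality of the two groups follows uniformly for any invertible form, with no Pfaffian hypothesis at that stage and no localisation. The paper asserts the same device works in the symplectic setting and yields a direct proof of Theorem \ref{GSChattopadhyayRao}; adopting it, rather than attempting a global change of form, is what would close the gap in your second stage.
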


Here we establish a similar result using the odd unitary analogue of Vaserstein's Gram-Schmidt process defined in \cite{AAAP2022}.

\vspace{2mm}

The odd unitary group is  a new type of classical-like group introduced by V.A. Petrov in \cite{Petrov2005}. This group is a generalisation of Bak's unitary group (\cite{Bak1969}) and thus generalises all the classical groups. It also serves as a generalisation of G. Tang's Hermitian groups (\cite{Tang1998}) and the groups ${\rm U}_{2n+1}(R)$ defined by E. Abe (see \cite{Abe1977}). This group is significant because it contains the odd dimensional orthogonal group ${\rm O}_{2n+1}$ and the groups ${\rm U}_{2n+1}(R)$ of E. Abe, which were not discussed in the theory of quadratic or Hermitian groups. Using the concept of the odd unitary group, we can provide uniform proofs for many classical results in literature. Petrov's odd unitary group and its properties were well studied in the last decade. The nilpotency of odd unitary $K_1$-functor for this group was studied by W. Yu and G. Tang in \cite{YuTang2016}. Also, W. Yu, Y. Li, and H. Liu (see \cite{YuLiLiu2018}), and A. Bak and R. Preusser (see \cite{BakPreusser2018, Preusser2020}) studied the E-normal subgroups of odd unitary groups. In \cite{AAAP2023}, we did a comparison of Petrov's odd unitary group and the Dickson-Siegel-Eichler-Roy elementary orthogonal group defined by A. Roy in 1968.

\vspace{2mm}

In \cite{AAAP2022}, we constructed Vaserstein-type matrices for Petrov's odd unitary group. We defined two matrices $\theta(v)$ and $\eta(v)$ of size $n+2m-1$, which can be used to construct unitary matrices $L(v)$ and $L(v)^{*}$ of size $n+2m$. We also proved that, under certain conditions the matrices $L(v)$ and $L(v)^{*}$ generate Petrov's odd elementary hyperbolic unitary group. In that paper, we computed the matrices corresponding to $\theta(v)$ and $\eta(v)$ and used it to prove the main result. Here we use the matrix version of $\theta(v)$ and $\eta(v)$ to prove the following theorem.
\begin{thm}
The group generated by $\theta(v)$ and $\eta(v)$ in Vaserstein-type construction for the odd unitary group coincides with the elementary linear group ${\rm E}_{n+2m-1}(R)$.  
\end{thm}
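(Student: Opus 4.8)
The plan is to prove the equality by establishing the two inclusions separately, following the symplectic model of Theorem \ref{GSChattopadhyayRao}. For the inclusion $\langle \theta(v), \eta(v)\rangle \subseteq {\rm E}_{n+2m-1}(R)$, I would start from the explicit matrix descriptions of $\theta(v)$ and $\eta(v)$ already computed in \cite{AAAP2022}. Each of these matrices has the shape $I_{n+2m-1} + N(v)$, where the nonzero entries of $N(v)$ are confined to a distinguished row and column together with a correction block coming from the form. Writing $N(v)$ out coordinate-wise and factoring it as an ordered product of elementary transvections $E_{ij}(\cdot)$ immediately places $\theta(v)$ and $\eta(v)$ inside ${\rm E}_{n+2m-1}(R)$; this direction is essentially bookkeeping once the explicit forms are in hand.

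The substantive direction is the reverse inclusion ${\rm E}_{n+2m-1}(R) \subseteq \langle \theta(v), \eta(v)\rangle$, for which it suffices to produce every standard generator $E_{ij}(r)$ with $i \neq j$ and $r \in R$ as a word in the $\theta(v)$ and $\eta(v)$. First I would specialise the parameter to $v = r\,e_k$, a scalar multiple of a standard basis vector, and read off from the explicit forms which transvections $\theta(r e_k)$ and $\eta(r e_k)$ realise directly. Typically such specialisations yield only the transvections in one fixed row and one fixed column, say those of the form $E_{1j}(r)$ and $E_{i1}(r)$ relative to the distinguished index. The remaining transvections are then recovered from the commutator identity $[E_{ik}(r), E_{kj}(s)] = E_{ij}(rs)$, which lets one propagate from the distinguished row and column to an arbitrary position $(i,j)$. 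Closing up under these commutators shows that all of ${\rm E}_{n+2m-1}(R)$ lies in the group generated by $\theta(v)$ and $\eta(v)$.

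The main obstacle I anticipate is that, unlike the purely symplectic case, the odd unitary construction carries the involution and the form parameter, so $N(v)$ contains quadratic cross-terms in the coordinates of $v$ (entries of the shape $v_i \overline{v_j}$ weighted by the form parameter). Consequently the naive specialisation $v = r e_k$ does not produce a single clean transvection, but a transvection together with a parasitic quadratic correction on the diagonal or in an off-diagonal block. The key technical step will be to cancel these corrections: either by multiplying $\theta(r e_k)$ against a suitable $\eta(s e_\ell)$ chosen so that the quadratic parts annihilate while the desired linear transvection survives, or by first isolating the linear part through a commutator with a second specialised matrix. Verifying that this cancellation can always be arranged, and that it does not secretly demand an invertibility hypothesis on the form beyond the normalisation already built into the construction (the analogue of the Pfaffian $1$ condition in Theorem \ref{GSChattopadhyayRao}), is where the real work lies.
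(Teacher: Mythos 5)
Your overall strategy coincides with the paper's: one inclusion by factoring the explicit matrices of $\theta(v)$ and $\eta(v)$ into elementary transvections, and the reverse inclusion by realising the generators $E_{1,i}(a)$ and $E_{i,1}(a)$ as $\theta(v)$ resp.\ $\eta(v)$ for suitable $v$, the remaining $E_{ij}(a)$ then being supplied by the standard commutator identity. So the skeleton of your argument is exactly the published proof.

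However, the obstacle you flag in your last paragraph --- quadratic cross-terms of the shape $v_i\overline{v_j}$ that would have to be cancelled against a second specialised matrix --- does not exist in this construction, and the ``real work'' you locate there evaporates. By definition $\theta(v)=I_{n+2m-1}+d^{t}\,\overline{v}\,\mu$ and $\eta(v)=I_{n+2m-1}-\bar{1}^{-1}\rho\,\overline{v}^{t}c$ are rank-one perturbations of the identity that are (semi)linear in $v$: since $d$ and $c$ are supported on the first coordinate, $\theta(v)$ differs from the identity only in its first row and $\eta(v)$ only in its first column, with entries linear in the $\overline{a}_i$. There is no correction block, no diagonal perturbation, and nothing to cancel. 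The only point where the naive specialisation $v=re_k$ fails to give a single clean transvection is in the $\varphi$-block, where $\theta(re_{2m-1+k})$ produces a product $\prod_{j}E_{1,2m-1+j}(-\bar{1}^{-1}\bar{r}\varphi_{kj})$ rather than one factor; the fix is not a cancellation trick but simply to precompose with the inverse of the linear map $v\mapsto \overline{v}\mu$ on that block, i.e.\ to take $v$ to be the $\varphi^{-1}$-weighted combination $\sum_{j}\bar{1}^{-2}(\varphi^{-1})_{jk}\bar{a}\,e_{2m-1+j}$ (and dually with $\varphi$ for $\eta$). This uses only the invertibility of the form matrix already built into the setup; no analogue of the Pfaffian-one normalisation, no local-global argument, and no auxiliary cancellation lemma is needed, which is precisely why the paper's proof is uniform in $\varphi$.
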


In \cite{ChattopadhyayRao2024}, the authors proved Theorem \ref{GSChattopadhyayRao} in different stages. First, they proved the result for a matrix congruent to the standard alternating matrix. Using that special case, they proved the result for general alternating matrix of Pfaffian $1$. They used different tools such as local-global principle and Rao-Swan lemma for proving the result. Here we use a more general proof technique which works for any matrix $\varphi$. A similar technique can also be done for the symplectic case and it will give a direct and simpler proof for the main result in \cite{ChattopadhyayRao2024}.

\section{Preliminaries} 
Here we describe the construction of the odd unitary group and its elementary subgroup defined by V.A. Petrov in \cite{Petrov2005}.

\subsection{Odd Unitary Groups}
Let $R$ denotes an associative ring with unity.
\begin{defn}
An additive map $\sigma:R \to R$ defined by $r \mapsto \bar{r}$ satisfying the properties $\overline{r_{1}r_{2}}=\bar{r}_{2} \bar{1}^{-1} \bar{r}_{1}$ and $\bar{\bar{r}}=r$ for all $r,r_{1}, r_{2} \in R$ is said to be a pseudo-involution on $R$.
\end{defn}

Consider a ring $R$ together with a pseudo-involution. Let $V$ be a right $R$-module. A map $\langle , \rangle:V \times V \to R$ is called a sesquilinear form on $V$ if it is biadditive and satisfy the equation
$$\langle v_{1}r,v_{2}s \rangle =\bar{r}\bar{1}^{-1}\langle v_{1},v_{2} \rangle s,$$ for all $v_{1},v_{2} \in V$ and $r,s \in R$.

\vspace{2mm}
A sesquilinear form which also satisfies the condition  $ \langle v_1,v_2 \rangle =-\overline{\langle v_2,v_1\rangle}$ for all $v_1,v_2 \in V$, is called an anti-Hermitian form on $V$.

\begin{defn}
Consider a right $R$-module $V$ together with an anti-Hermitian form $\langle, \rangle$ on it. One can easily see that the set $\mathfrak H=V \times R$ with the composition defined by
	$$(v_1,r)\dot{+}(v_2,s)=(v_1+v_2,r+s+\langle v_1,v_2 \rangle)$$ 
is a group with the identity element $(0,0)$, and the inverse of $(v,r)$ is $(-v,-r+\langle v,v\rangle)$ which we denote by $\dot{-}(v,r)$. This group $(\mathfrak H,\dot{+})$ is called the {\it Heisenberg group} associated with the form $\langle, \rangle$.
\end{defn}

The ring $R$ has a right action on $\mathfrak H$ given by $(v,r) \leftharpoonup s=(vs, \bar{s}\bar{1}^{-1}rs).$
Also, define the {\it{trace}} map on $\mathfrak H,~ tr:\mathfrak H \to R$, by   
$tr((v,r))=r-\bar{r}-\langle v,v \rangle$, which is a group homomorphism.

\vspace{2mm}
Define two subsets $\mathfrak L_{\min}$ and $\mathfrak L_{\max}$ of $\mathfrak H$ as follows:
$$\mathfrak L_{\min}=\{(0,r+\bar{r}): r \in R\} ,\hspace{2mm} \mathfrak L_{\max}=\{\zeta \in \mathfrak H: tr(\zeta)=0\}.$$

We can see that the $\mathfrak L_{\min}$ and $\mathfrak L_{\max}$ are subgroups of $\mathfrak H$ which are stable under the action of $R$, and also satisfies the condition $\mathfrak L_{\min} \leq \mathfrak L_{\max}$.

\begin{defn}
	An \emph{odd form parameter} is a subgroup $\mathfrak L$ of $\mathfrak H$ satisfying $\mathfrak L_{\min} \leq \mathfrak L \leq  \mathfrak L_{\max}$ and is stable under the action of $R$.
\end{defn}

If $\mathfrak L$ is an odd form parameter corresponding to the sesquilinear form $\langle, \rangle$, then the pair $q=(\langle, \rangle, \mathfrak L)$ is called an {\it odd quadratic form} and the pair $(V,q)$ is said to be an {\it odd quadratic space}.

\vspace{2mm}
Define the even part of an odd form parameter $\mathfrak{L}$ as $\mathfrak{L}_{ev}=\{ a \in R:(0,a) \in \mathfrak L \}$ and the even part of a quadratic space $(V,q)$ as 
$$V_{ev}=\{ v \in V:(v,a) \in \mathfrak L \mbox{ for a certain $a \in R$} \}.$$ 

Now, we can see the definition of the  odd unitary group given by V .A. Petrov in \cite{Petrov2005}. He defined it as a group of isometries on $V$ which satisfies certain conditions. 

\vspace{2mm}
Consider two $R$-modules $V$ and $V'$ together with form parameters $\mathfrak L$ and $\mathfrak L'$ respectively. We say that an isometry $f:V \to V'$ preserves the form parameters if $(f(v),r) \in \mathfrak L'$ for all $(v,r) \in \mathfrak L$. Two such isometries $f$ and $g$ are equivalent modulo $\mathfrak L'$ if  it satisfies the condition 
$$(f(v)-g(v),\langle g(v)-f(v), g(v)\rangle) \in \mathfrak L' \mbox{ for all } v \in V.$$ 
One can verify that this gives an equivalence relation on the set of all isometries from $V$ to $V'$.
If $f$ and $g$ are isometries which are equivalent modulo $\mathfrak L$, we denote it by $f \cong g({\rm mod} ~ \mathfrak L)$.
\begin{defn}[odd unitary group]
	For an odd quadratic space $(V,q)$, the group of all bijective isometries on $V$ that are equivalent to the identity map modulo $\mathfrak L$ is called the \emph{odd unitary group} and is denoted by ${\rm U}(V,q)$.
\end{defn}

Now, we can define the Eichler-Siegel-Dickson transvections. Let $v_1,v_2$ be two elements of an odd quadratic space $V$, and $r$ an element of $R$ satisfying $\langle v_1,v_2\rangle=0,~ (v_1,0) \in \mathfrak L$ and $(v_2,r) \in \mathfrak L$. Then the  transformations on $V$ of the form $T_{v_1,v_2}(r)$ defined by
$$T_{v_1,v_2}(r)(w)=w+v_1\bar{1}^{-1}(\langle v_2,w \rangle +r\langle v_1,w \rangle )+v_2\langle v_1,w \rangle,$$ are called {\it Eichler-Siegel-Dickson transvections}.

\vspace{2mm}
In \cite{Petrov2005}, Petrov proved that the transvections $T_{v_1,v_2}(r)$ lie in ${\rm U}(V,q)$ and defined certain subgroups of ${\rm U}(V,q)$ called the odd elementary hyperbolic unitary group.
\vspace{2mm}

A pair of  vectors $(v_1,v_2)$ such that $\langle v_1,v_2 \rangle =1$, $(v_1,0) \in \mathfrak L$, and $(v_2,0) \in \mathfrak L$ is called a hyperbolic pair. 
Let $\mathbb H$ be an odd quadratic space spanned by two vectors $e_{1}$ and $e_{-1}$ such that $\langle e_{1},e_{-1} \rangle =1$, equipped with the odd form parameter $\mathfrak L$ which is generated by $(e_{1},0)$ and $(e_{-1},0)$. Also, let $\mathbb H^{m}$ denote the orthogonal sum of $m$ copies of $\mathbb H$. 

\vspace{2mm}
Consider an odd quadratic space $V_{0}$ equipped with an odd quadratic form $q_0=(\langle, \rangle_0, \mathfrak L_0)$. The orthogonal sum $V=\mathbb H^{m} \oplus V_0$ is said to be the {\it odd hyperbolic unitary space} of rank $m$ corresponding to $\mathfrak L$ and the odd unitary group in this case is called the odd hyperbolic unitary group and is denoted by ${\rm U}_{2m}(R, \mathfrak L)$.

\begin{defn}
	For a quadratic space $(V,q)$, the Witt index is defined as the greatest number $m$ such that there exist $m$ mutually orthogonal hyperbolic pairs in $(V,q)$. It denoted by ${\rm ind(V,q)}$.
\end{defn}

Let $V$ be an odd quadratic space that has Witt index at least $m$. Then we can choose an embedding of $\mathbb H^{m}$ to $V$. Fix any such embedding. Then we have elements $\{e_{i}\}_{i=1, \ldots, m,-m, \ldots, -1}$ in $V$ satisfying $\langle e_{i},e_{j}\rangle =0$ for $i \neq -j,$ $\langle e_{i},e_{-i} \rangle =1$ for $i=1, \ldots, m$ and $(e_{i},0) \in \mathfrak L$. Take $V_{0}$ as the orthogonal complement of $(e_{1}, e_{-1}, \ldots, e_{m},e_{-m})$ in $V$ and $\langle, \rangle_{0}$ and $\mathfrak L_{0}$ as the restrictions of $\langle, \rangle$ and  $\mathfrak L$ respectively to $V_{0}$. Then $V_0$ is a quadratic space with the quadratic form $(\langle, \rangle_{0}, \mathfrak L_{0})$ and $V$ is isometric to the odd hyperbolic space $\mathbb H^{m} \oplus V_{0}$. Thus ${\rm U}(V,q)={\rm U}_{2m}(R, \mathfrak L)$.

\vspace{2mm}

Now, define the elementary subgroup ${\rm EU}_{2m}(R, \mathfrak L)$ of an odd hyperbolic unitary group ${\rm U}_{2m}(R, \mathfrak L)$ as follows.

\vspace{2mm}
The elementary transvections are transformations on $V$ of the form
\begin{align*}
&T_{i,j}(r)=T_{e_{-j},-e_{i}r\varepsilon_{j}}(0),~ j \neq \pm i, ~ r \in R,\\
&T_{i}(v,r)=T_{e_{i},v\varepsilon_{-i}}(-\bar{\varepsilon}_{-i}\bar{1}^{-1}r\varepsilon_{-i}),~ (v,r) \in \mathfrak L,
\end{align*}
where $i,j \in \{-m, \ldots -1, 1, \ldots, m\}$ and $\varepsilon_{i}= \begin{cases}
\bar{1}^{-1},~ i>0\\
-1, ~ i<0
\end{cases}$.

\vspace{2mm}

Petrov defined the odd elementary hyperbolic unitary group ${\rm EU}_{2m}(R, \mathfrak L)$ as the subgroup of ${\rm U}_{2m}(R, \mathfrak L)$ generated by all elementary transvections. 

\section{Vaserstein-type matrices for odd elementary hyperbolic unitary group}

Given a natural number $n$, let $R^n$ denote the space of $n$-tuples with entries from $R$. Consider an invertible alternating matrix $\varphi$ of size $2n$ which can be written in the form     
$\begin{pmatrix}
0 & -c\\
c^{t} & \mu
\end{pmatrix}$.
Then $\varphi^{-1}$ has the form $\begin{pmatrix}
0 & d\\
-d^{t} & \rho
\end{pmatrix}$, where $c,d$ are elements in $R^{2n-1}$. For $v \in R^{2n-1}$, L. N. Vaserstein (in \cite{VasersteinSuslin1976}, Lemma 5.4) considered the matrices 
$$\alpha =I_{2n-1}+d^t v \mu \mbox{ and } \beta = I_{2n-1} - \rho v^t c$$
and proved that the matrices 
$ \begin{pmatrix}
1 & 0\\
v^{t} & \alpha
\end{pmatrix}
\mbox{ and } \begin{pmatrix}
1 & v\\
0 & \beta
\end{pmatrix}$
are elements of ${\rm E}_{2n}(R) \cap {\rm Sp}_{\varphi}(R)$, where ${\rm Sp}_{\varphi}(R)$ is the group of all symplectic matrices with respect to the form $\varphi$. 

\vspace{2mm}
In \cite{AAAP2022}, we defined matrices analogous to this, for Petrov's odd elementary hyperbolic unitary group. We observed that the form matrix for Petrov's group can be written as the $\varPsi=\widetilde{\psi}_{m} \perp \varphi$, where $\varphi$ is the form matrix for the quadratic space $(V_0, \mathfrak L_0)$ and the matrices $\widetilde{\psi}_{r}$, $r \geq 1$ is defined recursively as follows.

\begin{equation} \label{X}
 \widetilde{\psi_{1}}=\begin{pmatrix}
0 & 1\\
-\bar{1} & 0
\end{pmatrix}, 
\mbox{ and } \widetilde{\psi}_{r}=\widetilde{\psi}_{1} \perp \widetilde{\psi}_{r-1} \mbox{ for } r >1.
\end{equation}

Then one can write the matrix $\varPsi$ in the form 
$\begin{pmatrix}
0 & c\\
-\bar{1} c^{t} & \mu
\end{pmatrix}$, in which 
$c=\begin{pmatrix}
1 & 0 & \ldots & 0
\end{pmatrix} \in R^{n+2m-1}$
and $\mu$ is the square matrix of order $(n+2m)$ given by $\mu=\begin{pmatrix}
0 & 0 & 0\\
0 & \widetilde{\psi}_{m-1} & 0\\
0 & 0 & \varphi
\end{pmatrix}$. 
Also we wrote $\varPsi^{-1}$ in the form
$\begin{pmatrix}
0 & d\\
-\bar{1} d^{t} & \rho
\end{pmatrix}$, where 
$d=\begin{pmatrix}
-\bar{1}^{-1} & 0 & \ldots & 0
\end{pmatrix} \in R^{n+2m-1}$ and 
$\rho=\begin{pmatrix}
0 & 0 & 0\\
0 & \widetilde{\psi}'_{m-1} & 0\\
0 & 0 & \varphi^{-1}
\end{pmatrix}$, in which $\widetilde{\psi}'_{m}$ is defined recursively as follows.
\begin{equation} \label{Y}
 \widetilde{\psi}'_{1}=\begin{pmatrix}
0 & -\bar{1}^{-1}\\
1 & 0
\end{pmatrix}
\mbox{ and } \widetilde{\psi}'_{r}=\widetilde{\psi}'_{1} \perp \widetilde{\psi}'_{r-1}, \mbox{  for  } r >1.
\end{equation}
\vspace{2mm}

Then we defined the Vaserstein type matrices for $v=(a_1, \ldots, a_{n+2m-1}) \in R^{n+2m-1}$, denoted by $L(v)$ and $L(v)^{*}$ as 
$$L(v)=\begin{pmatrix}
1 & 0\\
v^{t} & \theta(v)
\end{pmatrix}
\mbox{ and } L(v)^{*}=\begin{pmatrix}
1 & v\\
0 & \eta(v)
\end{pmatrix},$$       
where $\theta(v) =I_{n+2m-1}+d^t \overline{v} \mu$ and $\eta(v) = I_{n+2m-1} - \bar{1}^{-1} \rho \overline{v}^t c$.

\vspace{3mm}
We have also computed the the matrices $\theta(v)$ and $\eta(v)$ corresponding to the form $\Psi$. The matrix of $\theta(v)$ is 
$$ \setlength{\arraycolsep}{1 mm} \begin{pmatrix}
		1 & \overline{a}_3 & -\bar{1}^{-1} \overline{a}_2 & \ldots & \overline{a}_{2m-1} & -\bar{1}^{-1} \overline{a}_{2m-2} & -\bar{1}^{-1}(\overline{a}_{2m}\varphi_{11} & \ldots & -\bar{1}^{-1}(\overline{a}_{2m}\varphi_{1n}\\
        & & & & & & +\overline{a}_{2m+1} \varphi_{21} & & +\overline{a}_{2m+1} \varphi_{2n}\\ 
		& & & & & & +\ldots & & +\ldots\\ 
        & & & & & & +\overline{a}_{2m-1+n} \varphi_{n1}) & & +\overline{a}_{2m-1+n} \varphi_{nn})\\ 
		0 & 1 & 0 & \ldots & 0 & 0 & 0 & \ldots & 0\\
		0 & 0 & 1 & \ldots & 0 & 0 & 0 & \ldots & 0\\
		\vdots & \vdots & \vdots & \vdots & \vdots & \vdots & \vdots & \vdots & \vdots\\
		0 & 0 & 0 & \ldots & 0 & 0 & 0 & \ldots & 1
		\end{pmatrix},$$
and the matrix $\eta(v)$ is given by
\renewcommand*{\arraystretch}{1.3} 
$$ \eta(v)= \begin{pmatrix}
	1 & 0 & 0 & \ldots & 0\\
	\bar{1}^{-2} \overline{a}_3 & 1 & 0 & \ldots & 0\\
	-\bar{1}^{-1} \overline{a}_2 & 0 & 1 & \ldots & 0\\
	\vdots & \vdots & \vdots & \vdots & \vdots\\
	\bar{1}^{-2} \overline{a}_{2m-1} & 0 & 0 & \ldots & 0\\
	-\bar{1}^{-1} \overline{a}_{2m-2} & 0 & 0 & \ldots & 0\\
	-\bar{1}^{-1} (\overline{a}_{2m}(\varphi^{-1})_{11}+\overline{a}_{2m+1} (\varphi^{-1})_{12} & 0 & 0 & \ldots & 0\\
+\ldots+\overline{a}_{2m-1+n} (\varphi^{-1})_{1n}) &  &  &  &  \\
\vdots & \vdots & \vdots & \vdots & \vdots \\
	-\bar{1}^{-1} (\overline{a}_{2m}(\varphi^{-1})_{n1}+\overline{a}_{2m+1} (\varphi^{-1})_{n2} & 0 & 0 & \ldots & 1\\
+\ldots+\overline{a}_{2m-1+n} (\varphi^{-1})_{nn}) &  &  &  &  \\
	\end{pmatrix}.$$

Now using these matrices, we can prove that $\theta(v)$ and $\eta(v)$ generates the elementary group ${\rm E}_{n+2m-1}(R)$. 

\vspace{2mm}
\begin{thm}\label{GramSchmidt}
The group generated by $\theta(v)$ and $\eta(v)$ in Vaserstein-type construction for the odd unitary group coincides with the elementary linear group ${\rm E}_{n+2m-1}(R)$.  
\end{thm}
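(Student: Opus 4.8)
The plan is to establish the two inclusions separately, the routine one being $\langle \theta(v),\eta(v)\rangle \subseteq {\rm E}_{n+2m-1}(R)$. Reading off the two displayed matrices, $\theta(v)$ is the identity perturbed only along its first row, hence a product of the generators $E_{1j}(c) = I_{n+2m-1} + c\,e_{1j}$ with $j \ge 2$ (here $e_{1j}$ is the standard matrix unit); dually $\eta(v)$ is supported on its first column and so is a product of generators $E_{i1}(c)$ with $i \ge 2$. Both therefore lie in ${\rm E}_{n+2m-1}(R)$, giving the easy inclusion at once.

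For the reverse inclusion I would first argue that the $\theta(v)$ already realise \emph{every} single generator $E_{1j}(c)$. The off-diagonal part of the first row of $\theta(v)$ is the vector $-\bar 1^{-1}\bar v\mu$; since $\mu$ annihilates its first coordinate and restricts on the remaining coordinates to the invertible block $\widetilde\psi_{m-1}\perp\varphi$, the assignment $v \mapsto (\text{first row of }\theta(v))$ is onto the space of arbitrary row vectors in positions $2,\dots,n+2m-1$. Concretely, in the $\widetilde\psi_{m-1}$-block each column entry is governed by a single $\bar a_i$, so those columns are isolated immediately; for the $\varphi$-block one solves the relevant linear system using $\varphi^{-1}$. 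Because the pseudo-involution $\sigma$ and multiplication by the unit $\bar 1$ are bijections of $R$, an arbitrary target $c \in R$ in a prescribed column is attainable, so $\theta(v) = E_{1j}(c)$ for a suitable $v$. The identical computation with $\eta(v)$, whose first column is $-\bar 1^{-1}\rho\,\bar v^{t}$ (block-diagonal with $\varphi^{-1}$), yields every $E_{i1}(c)$.

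It then remains to produce the generators $E_{ij}(c)$ with $i,j \ge 2$, for which I would invoke the Steinberg commutator identity
$$[E_{i1}(a),E_{1j}(b)] = E_{ij}(ab), \qquad i \neq j,\ i,j \neq 1,$$
valid over any associative ring; taking $a = c$ and $b = 1$ shows $E_{ij}(c) \in \langle \theta(v),\eta(v)\rangle$. This step uses that the indices $i,1,j$ are distinct, which requires $n+2m-1 \ge 3$. Since the families $E_{1j}(c)$, $E_{i1}(c)$ and $E_{ij}(c)$ together exhaust all elementary generators, we obtain ${\rm E}_{n+2m-1}(R) \subseteq \langle \theta(v),\eta(v)\rangle$, and equality follows.

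The main obstacle is the entry-isolation step for $\theta(v)$ and $\eta(v)$: one must verify that distinct coordinates of $v$ do not interfere across columns and that the $\varphi$-block columns can be prescribed independently. The explicit matrices make the $\widetilde\psi_{m-1}$-part transparent, while the $\varphi$-part rests squarely on the invertibility of $\varphi$ (equivalently, of $\mu$ away from its first coordinate); tracking the conjugations $\bar a_i$ and the factors $\bar 1^{\pm 1}$ in the non-commutative setting is the only place where genuine care is required.
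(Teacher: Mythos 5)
Your proposal follows essentially the same route as the paper: both directions are handled by reading off that $\theta(v)$ and $\eta(v)$ are row-one/column-one elementary products, and then by solving for $v$ (using the invertibility of $\varphi$ and of $\bar 1$) to realise every $E_{1j}(c)$ and $E_{i1}(c)$. The only difference is that you make explicit the final step the paper leaves implicit — that the $E_{1j}$ and $E_{i1}$ generate all of ${\rm E}_{n+2m-1}(R)$ via the Steinberg commutator identity, which needs $n+2m-1\ge 3$ — and this is a correct and worthwhile clarification.
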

\begin{proof}
We have 
\begin{align*}
\theta(a_1, &\ldots,a_{n+2m-1})=\textstyle{\prod\limits_{k=1}^{m-1}}E_{1,2k}(\overline{a}_{2k+1}) \cdot \textstyle{\prod\limits_{k=1}^{m-1}}E_{1,2k+1}(-\bar{1}^{-1}\overline{a}_{2k})\\
& \cdot \textstyle{\prod\limits_{k=1}^{n}}E_{1,2m-1+k}(-\bar{1}^{-1}(\overline{a}_{2m}\varphi_{1k}+\overline{a}_{2m+1}\varphi_{2k}+ \ldots+ \overline{a}_{2m-1+n}\varphi_{nk}).
\end{align*}
Also,
\begin{align*}
&\eta(v)(a_1, \ldots,a_{n+2m-1})=\textstyle{\prod\limits_{k=1}^{m-1}}E_{2k,1}(\bar{1}^{-2}\overline{a}_{2k+1}) \cdot \textstyle{\prod\limits_{k=1}^{m-1}}E_{2k+1,1}(-\bar{1}^{-1}\overline{a}_{2k})\\
& \cdot \textstyle{\prod\limits_{k=1}^{n}}E_{2m-1+k,1}(-\bar{1}^{-1}(\overline{a}_{2m}(\varphi^{-1})_{k1}+\overline{a}_{2m+1}(\varphi^{-1})_{k2}+ \ldots+ \overline{a}_{2m-1+n}(\varphi^{-1})_{k1}).
\end{align*}
Since the RHS of the above equations belong to ${\rm E}_{n+2m-1}(R)$, we have $\theta(v)$ and $\eta(v)$ belong to ${\rm E}_{n+2m-1}(R)$.
\vspace{2mm}

Now we can prove that $\theta(v)$ and $\eta(v)$ will generate ${\rm E}_{n+2m-1}(R)$. It suffices to show that the generators $E_{1,i}(a)$ and $E_{i,1}(a)$ of ${\rm E}_{n+2m-1}(R)$ can be written as $\theta(v)$ or $\eta(v)$ for some $v \in R^{n+2m-1}$. 
\vspace{2mm}

We can see that $E_{1,i}(a)$, $a \in R$ and $2 \leq i \leq n+2m-1$ can be written in terms of $\theta(v)$ as follows.
\begin{align*}
 & E_{1,2k}(a)=\theta(\bar{a}e_{2k+1}), \mbox{ for } k \in \{1, \ldots, m-1\},\\
 & E_{1,2k+1}(a)=\theta(-\bar{1}^{-1}\bar{a}e_{2k}), \mbox{ for } k \in \{1, \ldots, m-1\},\\
 & E_{1,2m-1+k}(a)=\theta(\bar{1}^{-2}(\varphi^{-1})_{1k}\bar{a}e_{2m}+\bar{1}^{-2}(\varphi^{-1})_{2k}\bar{a}e_{2m+1}\\
 &\hspace{4cm}+ \ldots +\bar{1}^{-2}(\varphi^{-1})_{nk}\bar{a}e_{2m-1+n}), \mbox{ for } k \in \{1, \ldots, n\}.
\end{align*}
Similarly, the elements in ${\rm E}_{n+2m-1}(R)$ of the form $E_{i,1}(a)$, $a \in R$ and $2 \leq i \leq n+2m-1$ can be written in terms of $\eta(v)$.
\begin{align*}
 & E_{2k,1}(a)=\eta(\bar{1}^{-2}\bar{a}e_{2k+1}), \mbox{ for } k \in \{1, \ldots, m-1\},\\
 & E_{2k+1,1}(a)=\eta(-\bar{1}^{-1}\bar{a}e_{2k}), \mbox{ for } k \in \{1, \ldots, m-1\},\\
 & E_{2m-1+k,1}(a)=\eta(\bar{1}^{-2}\varphi_{k1}\bar{a}e_{2m}+\bar{1}^{-2}\varphi_{k2}\bar{a}e_{2m+1}\\
 &\hspace{4cm}+ \ldots +\bar{1}^{-2}\varphi_{kn}\bar{a}e_{2m-1+n}), \mbox{ for } k \in \{1, \ldots, n\}.
\end{align*}
Thus, the elementary group ${\rm E}_{n+2m-1}(R)$ is generated by $\theta(v)$ and $\eta(v)$ for $v \in R^{n+2m-1}$.
\end{proof}

\begin{rem}
 The proof of Theorem \ref{GramSchmidt} does not depend on the matrix $\varphi$ taken.
\end{rem}

\subsection*{Acknowledgment}
The first author would like to thank the Kerala State Council for Science, Technology and Environment (KSCSTE) Young Scientist Award Scheme (KSCSTE/1052/2021-KSYSA-RG) for providing grant to support this work. She would also like to thank Science and Engineering Research Board (SERB) for SURE grant (SUR/2022/004894). The second author thank the National Board of Higher Mathematics (NBHM) and KSCSTE for the postdoctoral fellowship (KSOM/131/2020-AO) at Kerala School of Mathematics.

\end{document}